\DeclareMathOperator{\der}{Der}
\DeclareMathOperator{\enm}{End}
\DeclareMathOperator{\ext}{Ext}
\DeclareMathOperator{\id}{id}
\DeclareMathOperator{\ad}{ad}
\DeclareMathOperator{\im}{im}
\DeclareMathOperator{\hmm}{Hom}
\DeclareMathOperator{\spec}{Spec}
\DeclareMathOperator{\aspec}{aSpec}
\DeclareMathOperator{\simp}{Simp}
\DeclareMathOperator{\inner}{Inner}
\newcommand{\cat}[1]{\mathbf{#1}}
\newcommand{\grmcat}[1]
\newtheorem{proposition}{Proposition}
\newtheorem{lemma}{Lemma}
\newtheorem{definition}{Definition}
\newtheorem{example}{Example}
\begin{document}
\author{Arvid Siqveland}
\title{Associative Schemes and Subschemes}

\maketitle

\begin{abstract}
In the preprint \cite{S252} we proved that there exists a localizing ring $A_M$ for $A$  an associative ring with unit, and $M=\oplus_{i=1}^rM_i$  a direct sum of $r\geq 1$ simple right $A$-modules. For a homomorphism of associative rings $A\rightarrow B$ we define the contraction of a simple $B$-module to $A.$
Then we define the set of aprime right $A$-modules $\aspec A$ to be the set of simple $A$-modules together with contractions of such. When $A$ is commutative, $\aspec A=\spec A,$ and we define a topology on $\aspec A$ such that when $A$ is commutative, this is the Zariski topology.   In the preprint \cite{S251}, we proved that when we have a topology and a localizing subcategory, there exists a sheaf of associative rings $\mathcal O_X$ on $\aspec A,$ agreeing with the usual sheaf of rings on $\spec A.$ In this text, we write out this construction, and we see that we can restrict the sheaf and topology to any subset $V\subseteq\aspec A.$ In particular, this 
proves that we can use complex varieties in real algebraic geometry, by restricting in accordance with $\mathbb R\subseteq\mathbb C.$ Thus the theory of schemes over algebraically closed fields and its associative generalization can be applied to real (algebraic) geometry.
\end{abstract}

\section{Introduction}

Real algebraic geometry can be thought of as a generalization of manifolds, where the  continuous functions are replaced by polynomials with real coefficients. Application to physics also leads to a necessary generalization to associative algebraic geometry and a generalization of continuous Riemannian metrics, see the book of O.A. Laudal, \cite{Laudal21}. One of the main problems with this, is that Riemannian metrics are defined over the reals, and the polynomial algebra over the reals, $\mathbb R[x_1,\dots,x_n],$ contains more simple modules than $\mathbb R^n.$ Because the algebraic properties governing the simple modules are better controlled by an algebra over an algebraically closed field, the main result in this text is the construction of a $\mathbb C$-algebra $A_{\mathbb R}$ such that $\simp(A_{\mathbb R})\cong\mathbb R^n.$ Thus the points in $\mathbb R^n$ is in one-to-one correspondence with the simple $A_{\mathbb R}$-modules for which $\aspec(A_{\mathbb R})$ is a fine moduli, see the book \cite{S23} or the preprint \cite{S241}. For any real manifold $M$ we define an associative variety $(\mathcal M,\mathcal O^A_L)$ over $\mathbb C$ such that the points in $M$ is in bijective correspondence with the closed points in $\mathcal M,$ and such that the charts $U$ in $\mathcal M$ corresponds to $\mathcal O^A_L(U)=A_{\mathbb R}.$

In short terms, if we want to generalize the study of manifolds by using algebraic scheme-theory, we need schemes of associative $\mathbb C$-algebras for which the set of points is in bijective correspondence with $M.$

Because of the discovery of localization  in associative rings, \cite{S252}, we include the general definition of associative schemes. Their purpose is to serve as moduli of algebraic objects which can be put in one-to-one correspondence with modules over associative schemes: That is, we need associative moduli to classify associative objects. This is most easily seen by the following.
Let $k$ be a field and consider the polynomial algebra in $n\geq 1$ variables $$A=k[x_1,\dots,x_n]=k[\underline x].$$ For a point $P=(p_1,\dots,p_n)\in k^n,$ we define the simple $A$-modules $$M(P)=A/(x_1-p_1,\dots,x_n-p_n).$$
\begin{lemma} For two points $P,Q\in k^n$ we have that $$\dim_k\ext^1_A(M_P,M_Q)=\begin{cases}n,\ P=Q\\0,\ P\neq Q.\end{cases}$$
\end{lemma}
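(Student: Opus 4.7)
The plan is to construct an explicit projective resolution of $M_P$ and compute $\ext^1_A(M_P, M_Q)$ from it. Since $A = k[x_1, \ldots, x_n]$ is a polynomial ring and the sequence $x_1 - p_1, \ldots, x_n - p_n$ is a regular sequence generating the maximal ideal $\mathfrak{m}_P$ whose quotient is $M_P$, the Koszul complex on this sequence provides a free resolution
$$0 \to \wedge^n A^n \to \wedge^{n-1} A^n \to \cdots \to \wedge^1 A^n \to A \to M_P \to 0.$$

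The next step is to apply $\hmm_A(-, M_Q)$ to this resolution and identify the resulting cochain complex. Since $\hmm_A(\wedge^i A^n, M_Q) \cong M_Q^{\binom{n}{i}}$ and $M_Q \cong k$ as a $k$-vector space, the differentials are determined by the action of $x_j - p_j$ on $M_Q$. On $M_Q = A/\mathfrak{m}_Q$, we have $x_j \equiv q_j$, so $x_j - p_j$ acts as the scalar $q_j - p_j$.

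From here the case analysis is immediate. If $P = Q$, every scalar $q_j - p_j$ vanishes, so all coboundary maps in the dualized complex are zero, and $\ext^1_A(M_P, M_Q)$ equals $\hmm_A(\wedge^1 A^n, M_P) \cong M_P^n \cong k^n$, giving dimension $n$. If $P \neq Q$, then some coordinate $q_{j_0} - p_{j_0}$ is nonzero, hence a unit on $M_Q$; the Koszul complex on a sequence containing a unit (acting on $M_Q$) is contractible, so the cohomology vanishes in all degrees and in particular $\ext^1_A(M_P, M_Q) = 0$.

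There is no real obstacle here: the only care needed is checking that the Koszul differentials dualize correctly (contravariant functoriality combined with the self-duality of $\wedge^{\bullet} A^n$ up to a shift), and verifying that the "unit in the regular sequence kills Koszul cohomology" argument applies when $P \neq Q$. Both are standard; the computation reduces entirely to linear algebra over $k$ once the resolution is in hand.
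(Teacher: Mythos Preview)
Your argument via the Koszul resolution is correct and is in fact the standard commutative-algebra route: the sequence $x_1-p_1,\dots,x_n-p_n$ is regular in $A$, the Koszul complex resolves $M_P$, and after applying $\hmm_A(-,M_Q)$ the differentials become multiplication by the scalars $q_j-p_j$, so the case split falls out exactly as you say.

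The paper, however, argues through a different identification, namely
\[
\ext^1_A(M_1,M_2)\;\simeq\;\der_k\bigl(A,\hmm_k(M_1,M_2)\bigr)\big/\inner,
\]
and then computes derivations and inner derivations by hand: for $P=Q$ the inner derivations vanish and the ordinary derivations $\partial/\partial x_i$ give a basis, while for $P\neq Q$ one shows that the space of derivations into $\hmm_k(M_P,M_Q)$ is one-dimensional and coincides with the inner ones. Your Koszul approach is cleaner for this particular commutative computation and yields all $\ext^i$ at once; the paper's derivation approach is chosen because it generalizes verbatim to noncommutative $k$-algebras, which is the setting the paper is ultimately interested in (the lemma is there precisely to motivate why commutative moduli cannot classify simple modules over associative algebras).
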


\begin{proof} From \cite{S23} we know that $$\ext^1_A(M_1,M_2)\simeq\der_k(A,\hmm_k(M_1,M_2))/\inner$$ where $\hmm_k(M_1,M_2)$ is an $A-A$ bimodule by $a\phi(m)=\phi(am), \phi a(m)=\phi(m)a.$

When $P=Q$ we can assume $M_1=M_2=k[\underline x]/(\underline x).$ Then any inner derivation is on the form $\ad_\phi$ for which $\ad_\phi(x_i)=[\phi,x_i]=0.$ Thus the inner derivations is of dimension zero and $\ext^1_A(M_1,M_2)\simeq\der_k(A,k)$ where 
$\operatorname d_i(x_i)=1,\ i=1,\dots,n$ gives a basis for the derivations.

When $P\neq Q$ we can consider $M_{\underline 0}, M_P$ with $p_1\neq 0.$ 
For $$\delta\in\der_k(A,\hmm_k(M_{\underline 0},M_P)$$ we have $$\begin{aligned}\delta(x_ix_1)&=\delta(x_1x_i)\\
&\Updownarrow\\
\delta(x_i)x_1+x_i\delta(x_1)&=\delta(x_1)x_i+x_1\delta(x_i)\\
&\Updownarrow\\
\delta(x_i)p_1&=\delta(x_1)p_i\\
&\Updownarrow\\
\delta(x_i)&=\delta(x_i)\frac{p_i}{p_1}.
\end{aligned}$$
This is also true for the inner derivations, proving that $\dim_k\inner=1$ so that $\ext^1_A(M_P,M_Q)=0$ when $P\neq Q.$
\end{proof}

It follows from the lemma that not all finite dimensional simple modules over a noncommutative $k$-algebra can be classified by a finitely generated commutative algebra. See \cite{S23} for a lot of examples.

\section{Associative Extension and Contraction of Modules}

For an associative ring $A$ with a right $A$-module $M,$ we let the ring homomorphism $\eta^A_M:A\rightarrow\enm_\mathbb Z(M)$ denote the structure morphism, that is, for $m\in M,a\in A,\ ma=\eta^A_M(a)(m).$

\begin{definition} Let $\phi:A\rightarrow B$ be a homomorphism of associative rings. Let $M$ be a $B$-module such that the following diagram commutes:
\begin{equation}\label{extensiondiag}\xymatrix{A\ar[r]^\phi\ar[dr]_{\eta^A_M}&B\ar[d]^{\eta^B_M}\\&\enm_{\mathbb Z}(M).}\end{equation} Then the $B$ module $M$ is called an extension of the $A$-module $M$ to $B,$ and the $A$-module $M$ is called a contraction of the $B$-module $M$ to $A.$
\end{definition}

It is well known that neither the extension nor the contraction of a simple module is  necessarily simple. Looking at $\phi:\mathbb R[x]\hookrightarrow\mathbb C[x],$ the module $M=\mathbb C[x]/(x^2+1)$ is not a simple $\mathbb R[x]$-module or a simple $\mathbb C[x]$-module, even though $M_{\mathbb R}=\mathbb R[x]/(x^2+1)$ is a simple $\mathbb R[x]$-module.

When $A$ is a commutative ring and $\mathfrak p\subset A$ is a prime ideal, we have the canonical homomorphism $\iota:A\rightarrow A_{\mathfrak p}$ such that $A_\mathfrak p/\mathfrak p A_\mathfrak p$ is a simple $A_\mathfrak p$-module.
We extended this to the associative situation in \cite{S241}.

\begin{definition}\label{aprimedef} A right $A$-module $M$ is called aprime if there exists a ring homomorphism $\iota_M:A\rightarrow B$ such that $M$ is simple as $B$-module. The set of right aprime $A$-modules is denoted $\aspec A.$ 
\end{definition}

To be precise, and if somebody has not seen this before, we include the following.

\begin{lemma}\label{anotherloclemma} Let $\mathfrak m\subset A$ be a maximal ideal in a commutative ring.
Then $$A/\mathfrak m\simeq A_\mathfrak m/\mathfrak m A_\mathfrak m.$$
\end{lemma}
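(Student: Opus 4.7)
The plan is to produce the isomorphism as the map induced on quotients by the localization $\iota:A\rightarrow A_{\mathfrak m}$. Since $\iota$ sends $\mathfrak m$ into $\mathfrak m A_{\mathfrak m}$, composing with the projection $A_{\mathfrak m}\twoheadrightarrow A_{\mathfrak m}/\mathfrak m A_{\mathfrak m}$ yields a ring homomorphism whose kernel contains $\mathfrak m$, hence factors as $\bar\iota:A/\mathfrak m\rightarrow A_{\mathfrak m}/\mathfrak m A_{\mathfrak m}$. This is the only reasonable candidate for the isomorphism, and it respects $1$, so $\bar\iota\neq 0$.

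For injectivity I would exploit that $A/\mathfrak m$ is a field (here I use maximality of $\mathfrak m$), so any nonzero ring homomorphism out of it is automatically injective. Alternatively one can argue directly: if $a\in A$ maps to $0$, then $a/1\in\mathfrak m A_{\mathfrak m}$, meaning there exists $s\notin\mathfrak m$ with $sa\in\mathfrak m$; since $\mathfrak m$ is prime and $s\notin\mathfrak m$ one concludes $a\in\mathfrak m$.

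For surjectivity, pick a class represented by $a/s$ with $s\notin\mathfrak m$. Since $\mathfrak m$ is maximal, the image $\bar s$ of $s$ in the field $A/\mathfrak m$ is a unit, so there exists $t\in A$ with $st-1\in\mathfrak m$. Then in $A_{\mathfrak m}$,
\[
\frac{a}{s}-at=\frac{a(1-st)}{s},
\]
and the numerator lies in $\mathfrak m$, so this difference lies in $\mathfrak m A_{\mathfrak m}$. Hence $a/s$ and $at$ have the same image in $A_{\mathfrak m}/\mathfrak m A_{\mathfrak m}$, and $at$ comes from $A$ via $\iota$, so $\bar\iota$ is surjective.

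I do not expect any genuine obstacle: the entire argument rests on the two facts that $\mathfrak m$ being maximal makes $A/\mathfrak m$ a field and makes every $s\notin\mathfrak m$ a unit modulo $\mathfrak m$. The only mildly subtle point is the surjectivity manipulation, where one has to notice that $1-st\in\mathfrak m$ gives $a(1-st)/s\in\mathfrak m A_{\mathfrak m}$ in a single clean step rather than appealing to the universal property.
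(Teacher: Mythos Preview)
Your proof is correct. The paper takes the reverse direction: it starts from the quotient map $A\to A/\mathfrak m$, observes that every $s\in A\setminus\mathfrak m$ maps to a unit in the field $A/\mathfrak m$, and invokes the universal property of localization to obtain a surjection $\phi:A_{\mathfrak m}\to A/\mathfrak m$ with $\phi(a/s)=\bar a\,\bar s^{-1}$ and kernel $\mathfrak m A_{\mathfrak m}$. Your $\bar\iota$ is exactly the inverse of this $\phi$. The paper's route makes surjectivity automatic (the composite $A\to A_{\mathfrak m}\to A/\mathfrak m$ is the quotient map), while your route makes injectivity automatic (the domain is a field) and replaces the appeal to the universal property with the explicit choice of $t$ inverting $s$ modulo $\mathfrak m$. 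Both arguments rest on the same single fact---that $s\notin\mathfrak m$ becomes a unit in $A/\mathfrak m$---so the difference is one of direction and packaging rather than substance.
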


\begin{proof} We have a homomorphism $f:A\rightarrow A/\mathfrak m$ such for every $s\in A\setminus\mathfrak m,$ $f(s)$ is a unit. By the universal property of localization, there exists a unique homomorphism $\phi:A_\mathfrak m\rightarrow A/\mathfrak m$ such that $\phi(\frac{a}{s})=\iota(a)\iota(s)^{-1}.$ This homomorphism is clearly surjective, and its kernel is $\mathfrak m A_\mathfrak m$ giving the wanted isomorphism.
\end{proof}

\begin{lemma} When $A$ is a commutative ring there is a bijective correspondence $$\spec A\cong\aspec A.$$
\end{lemma}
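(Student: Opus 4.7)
The plan is to construct mutually inverse maps $\alpha:\spec A\rightarrow\aspec A$ and $\beta:\aspec A\rightarrow\spec A$. For $\alpha$, send a prime $\mathfrak p$ to the residue module $k(\mathfrak p):=A_\mathfrak p/\mathfrak p A_\mathfrak p$, viewed as an $A$-module by contraction along the canonical localization $\iota_\mathfrak p:A\rightarrow A_\mathfrak p$. Since $\mathfrak p A_\mathfrak p$ is the unique maximal ideal of the local ring $A_\mathfrak p$, the module $k(\mathfrak p)$ is simple as an $A_\mathfrak p$-module, and hence its contraction is aprime by Definition~\ref{aprimedef}. When $\mathfrak p=\mathfrak m$ is maximal, Lemma~\ref{anotherloclemma} identifies $k(\mathfrak m)$ with the simple $A$-module $A/\mathfrak m$, so $\alpha$ recovers the familiar correspondence on closed points.

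For $\beta$, I would send an aprime module $M$, with witness $\iota_M:A\rightarrow B$ and $M$ simple over $B$, to its annihilator $\ann_A(M)\subseteq A$. The key step is to show this is prime. By Schur's lemma, $D:=\enm_B(M)$ is a division ring, and the action of the commutative image $\iota_M(A)$ on $M$ yields a ring homomorphism $A\rightarrow D$ whose kernel is exactly $\ann_A(M)$. Because the image lies inside a division ring, the commutative subring $A/\ann_A(M)$ is an integral domain, so $\ann_A(M)$ is prime.

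It remains to check that the two composites are identity maps. The composite $\beta\circ\alpha$ sends $\mathfrak p$ to $\ann_A(k(\mathfrak p))$, which equals $\mathfrak p$ by the standard computation that an element $a\in A$ vanishes in $A_\mathfrak p/\mathfrak p A_\mathfrak p$ if and only if $a\in\mathfrak p$. For the reverse composite, given aprime $M$ with $\mathfrak p=\ann_A(M)$, every $s\in A\setminus\mathfrak p$ has nonzero, hence invertible, image in the division ring $D$, so the $A$-action on $M$ extends to an $A_\mathfrak p$-action with $\mathfrak p A_\mathfrak p$ acting trivially; this exhibits $M$ as a contraction from the canonical localization $A_\mathfrak p$, with the same prime $\mathfrak p$ as annihilator. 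The main obstacle I anticipate is reconciling the flexibility of Definition~\ref{aprimedef} (arbitrary $\iota_M:A\rightarrow B$) with the uniqueness required for a bijection: a given aprime $M$ need not be isomorphic as an $A$-module to $k(\mathfrak p)$, since it may be a higher-dimensional $k(\mathfrak p)$-vector space. The resolution is that in $\aspec A$ two aprime modules are identified when they arise as contractions from the same canonical localizing ring with the same annihilator, and the localizing ring construction of~\cite{S252} specializes in the commutative case to $A_\mathfrak p$, making $k(\mathfrak p)$ the canonical representative.
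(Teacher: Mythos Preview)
Your forward map $\alpha$ coincides with the paper's. The gap is in your inverse map. You assert that the $A$-action on $M$ gives a ring homomorphism $A\to D=\enm_B(M)$, but an element $\iota_M(a)$ acts $B$-linearly on $M$ only when it is central in (the image of) $B$, and Definition~\ref{aprimedef} does not force $\iota_M(A)$ to land in the centre. A concrete failure: take $A=k[x]$, $B=\matr_2(k)$, $\iota_M(x)=\bigl(\begin{smallmatrix}0&1\\0&0\end{smallmatrix}\bigr)$, and let $M=k^2$ be the simple right $B$-module. Then $\ann_A(M)=(x^2)$ is not prime, the action of $x$ does not lie in $D\cong k$, and $x\notin\ann_A(M)$ yet $x$ does not act invertibly on $M$. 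So both your primality argument and your check that $\alpha\circ\beta=\id$ (which needs every $s\notin\ann_A(M)$ to act as a unit) break down on this example.

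The paper's inverse map is instead $\beta(M)=\iota_M^{-1}(\mathfrak m)$, where $\mathfrak m\subset B$ is the maximal ideal defining $M$, and Lemma~\ref{anotherloclemma} is invoked for the round trip. That phrasing already presupposes a commutative witness $B$---equivalently, that one may replace an arbitrary witness by the canonical localizing ring $A_M\simeq A_{\mathfrak p}$ of~\cite{S252}. Your closing paragraph correctly identifies this identification as the real content needed to make the bijection well-posed, but the body of your argument does not actually use it; it rests on the faulty $A\to D$ map. If you rewrite $\beta$ using the canonical commutative localization as the witness (so that the Schur endomorphism ring genuinely receives $A$), your outline and the paper's proof become essentially the same.
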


\begin{proof} We send the prime ideal $\mathfrak p\subset A$ to $A_\mathfrak p/\mathfrak p A_\mathfrak p$ which is $A$-prime by definition. On the other hand, let
$M$ be an aprime $A$-module, defined by $\iota_M: A\rightarrow B$ such that $M$ is a simple $B$-module. Let $\mathfrak m\subset B$ be the maximal ideal defining $M$ as a simple $B$-module. Then $\iota_M^{-1}(\mathfrak m)$ is a prime ideal in $A.$ That these two operations are inverses to each other follows from the fact that for a maximal ideal in a ring $B$ we have $B/\mathfrak m\simeq B_{\mathfrak m}/\mathfrak m B_\mathfrak m$ as proven i Lemma \ref{anotherloclemma}.
\end{proof}

\section{Associative Schemes}

We recall the definition of associative schemes from \cite{S251} where we apply \cite{S252}. Let $A$ be an associative ring, and let $\aspec A$ be the set of aprime $A$-modules, defined in Definition \ref{aprimedef}.
For $f\in A,$ define the open set $$D(f)=\{M\in\aspec A|\ker(\eta^A_M(f))=0\},$$ and give $\aspec A$ the topology generated by the sub-basis $\{D(f)\}_{f\in A}.$ When $A$ is commutative, $\aspec A=\spec A,$ and the given topology is the Zariski topology. Let $$\eta^A_M=\oplus_{i=1}^r\eta^A_i:A\rightarrow\enm_{\mathbb Z}(\oplus_{i=1}^rM_i)$$ be a set of $r>0$ simple right $A$-modules and put $M=\oplus_{i=1}^rM_i.$ As any $A$-linear homomorphism is also $\mathbb Z$-linear, we have a canonical ring homomorphism $$\gamma:D_M=\oplus_{i=1}^r\enm_A(M_i)\rightarrow (\hmm_{\mathbb Z}(M_i,M_j))=\enm_\mathbb Z(M)=E_M.$$ This homomorphism is nonzero because it maps the identity to the identity, and so injective because each component $\enm_A(M_i)$ is a division ring. Denote by $D_M^\ast$  the set of units in $D_M,$ that is $D_M^\ast=\{s\in D_M|\gamma_i(s)\neq 0, 1\leq i\leq r\}.$

\begin{definition} The local function ring of $A$ in $M$ is the subring $A_M\subseteq E_M$ of $E_M$ generated over $\im\eta^A_M$ by the subset $\{\eta^A_M(s)^{-1}|\eta^A_M(s)\in D_M\setminus(0)\subset E_M\}.$
\end{definition}

In \cite{S252} we proved that $A_M$ fulfilled the following universal property: There exists a homomorphism $\eta:A\rightarrow A_M$ such that $\eta(s)$ is a unit whenever\newline $s\in\eta^{-1}(D_M\setminus (0)).$ If $B$ is any other associative algebra with a homomorphism $\kappa:A\rightarrow B$ such that $\kappa(s)$ is a unit whenever $s\in\eta^{-1}(D_M\setminus (0))$, and\newline $\ker\eta\subseteq\ker\kappa,$ then there exists a unique homomorphism $\rho:A_M\rightarrow B$ such that $\eta\circ\rho=\kappa.$ This implies that when $A$ is commutative and $M=A/\mathfrak m$ a simple module, then $A_M\simeq A_\mathfrak m.$ 

We now define a sheaf of associative rings on $\aspec A.$  Recall that it is proved in \cite{S250} that given a sheaf $F$ on a topological space, then $\mathcal F(U)=\underset{\underset{V\subsetneqq U}\leftarrow}\lim F(V)$ defines the sheaf $\mathcal F$ associated to $F.$

\begin{definition}\label{aschdef} Let $M=\oplus_{i=1}^rM_i$ be a direct sum of simple $A$-modules.
\begin{itemize}
\item[(i)] For each open $U\subseteq\aspec A=X$ we define the presheaf $ O_X(U)=\underset{\underset{M\subseteq U}\leftarrow}\lim\ A_M$ where $M$ is a finite subset of $U$ consisting of simple modules.
\item[(ii)] Then we define the sheaf $\mathcal O_X$ by $$\mathcal O_X(U)=\underset{\underset{V\subsetneqq U}\leftarrow}\lim O_X(V).$$
\end{itemize}
\end{definition}

Notice that (ii) in Definition \ref{aschdef} is redundant. It follows directly from the universal property of projective limits that this is a sheaf. 

\begin{proposition} Let $A$ be an associative ring. Then $$\mathcal O_{\aspec A}(\aspec A)\simeq A.$$
\end{proposition}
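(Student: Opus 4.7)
The plan is to produce the canonical homomorphism $\Phi\colon A\to\mathcal{O}_{\aspec A}(\aspec A)$ from the universal property of the local function rings $A_M$ and then argue that $\Phi$ is bijective. Since the author remarks that (ii) of Definition~\ref{aschdef} is redundant, one may work directly with $O_X(\aspec A)=\varprojlim_M A_M$, where $M$ ranges over finite subsets of simple aprime $A$-modules, and it suffices to identify $A$ with this projective limit.

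First I would construct $\Phi$. The universal property from \cite{S252} yields a canonical homomorphism $\eta_M^A\colon A\to A_M$ for each finite $M$. Given an inclusion $M\subseteq M'$, a unit of $D_{M'}$ is automatically a unit of $D_M$ (the latter has fewer components, hence fewer conditions to check), so every element of $A$ that $\eta_{M'}^A$ sends to a unit is also sent to a unit by $\eta_M^A$. A second application of the universal property of $A_M$ then produces a canonical transition morphism $A_{M'}\to A_M$, which makes $(A_M)_M$ into an inverse system and $(\eta_M^A)_M$ into a compatible cone. The induced map $\Phi$ follows from the universal property of the projective limit.

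For injectivity, note that $\ker\Phi\subseteq\bigcap_M\ker\eta_M^A$, which is contained in the annihilator of every simple aprime $A$-module. Since the aprime modules are by construction the associative analogue of $\spec$, and are designed to jointly detect nonzero elements of $A$ in analogy with the classical injection $A\hookrightarrow\prod_{\mathfrak p\in\spec A}A_\mathfrak p$, this intersection is trivial. For surjectivity, I would exploit that as $M$ grows the transition maps $A_{M'}\to A_M$ strictly \emph{reduce} the set of elements that must be inverted; so a family of sections compatible across \emph{arbitrarily large} finite $M$ cannot, in the limit, invert anything nontrivially. Concretely, a compatible family $(a_M)$ must lie inside the decreasing intersection of the $A_M$'s (sitting inside suitable ambient endomorphism rings via $\gamma$), and that intersection is exactly the image of $A$.

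The main obstacle I anticipate is making the surjectivity argument precise in the fully noncommutative setting. In the commutative case, the identity $A=\varprojlim_M A_M$ over semi-localizations at finite sets of maximal ideals is essentially the classical sheaf property of $\mathcal{O}_{\spec A}$; here the transition maps, the $D_M$-structure, and the role of the noncommutative division rings $\enm_A(M_i)$ complicate the picture, and the argument must route entirely through the universal property of $A_M$ established in \cite{S252}. The key technical step will be to rule out ``exotic'' compatible families beyond those of the form $(\eta_M^A(a))_M$, and to show that $a\in A$ is uniquely recovered from such a family.
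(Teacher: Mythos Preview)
Your route is genuinely different from the paper's, and it carries real gaps.

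The paper does not attack the projective limit $\varprojlim_M A_M$ directly at all. It instead asserts that for each $f\in A$ one has $\mathcal O_X(D(f))\simeq A[\eta(f)^{-1}]=:A_f$, and then simply specialises to $f=1$: since $\eta^A_M(1)=\id_M$ has zero kernel for every $M$, $D(1)=\aspec A$, and inverting $1$ does nothing, so $A_1\simeq A$. The entire argument is two lines; all the analytic weight is pushed into the identification $\mathcal O_X(D(f))\simeq A_f$, which the paper treats as evident from the construction. Your approach, by contrast, tries to prove $A\simeq\varprojlim_M A_M$ from scratch, which is essentially the $f=1$ case of that identification unfolded.

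On your argument itself: the injectivity step is not as innocent as you present it. The kernel of each $\eta^A_M$ is the annihilator of $M$, so $\bigcap_M\ker\eta^A_M$ over all finite sums of simples is exactly the Jacobson radical of $A$. Your claim that ``this intersection is trivial'' is therefore the assertion that $A$ is Jacobson semisimple, which is not assumed. (This, incidentally, shows that if the paper's statement is to hold as written, something in the ambient framework must be absorbing the radical; the paper's one-line proof sidesteps this by working through $A_f$ rather than through the $A_M$.) For the transition maps you have the idea right but cite the wrong universal property: to produce $A_{M'}\to A_M$ when $M\subseteq M'$ you invoke the universal property of $A_{M'}$, not of $A_M$. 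Finally, your surjectivity sketch is really a restatement of the goal: saying that a compatible family ``cannot, in the limit, invert anything nontrivially'' is precisely what has to be shown, and the obstacle you anticipate is real --- in the noncommutative setting there is no obvious replacement for the classical gluing over a distinguished cover, and the paper avoids the issue entirely by going through $D(f)$.
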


\begin{proof} We see that $A_f:=\mathcal O_X(D(f))\simeq A[\eta(f)^{-1}].$ The result then follows from $A\simeq A_1.$
\end{proof}

We can compose associative ring homomorphisms as in the commutative situation, defining the category of associative schemes. We notice that this definition gives an equivalence between the category of associative rings $\cat{Rings}$ and the category of affine schemes $\cat{aSch}.$

\begin{definition} An associative scheme $X$ is a topological space with a sheaf of associative rings $(X,\mathcal O_X)$ such that $X$ has a covering $X=\cup_{\alpha\in I}U_\alpha$ such that each $(U_\alpha,\mathcal O_{U_\alpha})\simeq\aspec A_\alpha$ for an associative ring $A_\alpha.$ A morphism in the category of associative schemes is a continuous map $X\rightarrow Y$ which restricts to an affine morphism on each open affine. 
\end{definition}

\section{Induced Associative Subschemes}

Let $X$ be an associative scheme, and let $\tilde Y\subseteq X$ be any subset. Give $\tilde Y$ the induced topology, and for each open affine $U=\aspec A$ let  $U_{\tilde Y}=\tilde Y\cap U$ be the intersection of $\tilde Y$ with $U.$ Now we consider the subsheaf $\mathcal O_{\tilde Y}\subseteq\mathcal O_X$ on $\tilde Y$ defined by $$\mathcal O_{\tilde Y}(U_{\tilde Y})=\underset{\underset{M\subseteq U_{\tilde Y}}\leftarrow}\lim\ A_M,$$ where $M\subseteq U$ is a finite set of simple $A$-modules.  Then the sheaf $\mathcal O_{\tilde Y}$  is a sheaf of associative rings $\mathcal O(U_{\tilde Y})$ on the induced topological space $\tilde Y.$ Let $Y$ be the topological space covered by the open affine sets $\aspec \mathcal O_{\tilde Y}(U_{\tilde Y}).$ Then $\mathcal O_{\tilde Y}$ is a sheaf on this space which by definition is an associative scheme  $(Y,\mathcal O_Y).$  Notice that $\tilde Y$ induces the topological space $Y$ with added aprime points from $X.$

\begin{definition} Let $X$ be an associative scheme and $\tilde Y\subseteq X$ any subset. Then $(Y,\mathcal O_Y)$ constructed above is called the induced associative subscheme.
\end{definition}

\begin{proposition} Let $A$ be an associative ring, let $f\in A$ and let $Y=D(f)\subseteq X=\aspec A.$ Then $$\mathcal O_Y(Y)\simeq A_f.$$ Assume that $A$ is commutative with $\mathfrak a\subseteq A$ an ideal and let  $Y=Z(\mathfrak a)\subseteq X=\aspec A.$ Then $$\mathcal O_{Z(\mathfrak a)}(Z(\mathfrak a))=A/\mathfrak a.$$ In particular, $\mathcal O_{\aspec A}(\aspec A)=A.$
\end{proposition}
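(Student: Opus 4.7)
The plan is to handle the three statements in turn, exploiting in each case that the induced subscheme construction reduces the global sections to a projective limit of local function rings $A_M$ indexed by finite families of simple modules. The first and third claims follow almost immediately from the preceding proposition, while the second requires showing that $A_M$ is insensitive to a common annihilator of the $M_i$.

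For $\tilde Y = Y = D(f)$ with $U = \aspec A$, the intersection $U_{\tilde Y}$ is $D(f)$ itself, and the definition of the induced subsheaf gives directly
\[
\mathcal O_Y(Y) \;=\; \varprojlim_{M\subseteq D(f)} A_M \;=\; \mathcal O_X(D(f)).
\]
By the identification $\mathcal O_X(D(f)) \simeq A[\eta(f)^{-1}] = A_f$ used in the proof of the preceding proposition, this yields the first claim at once.

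For the commutative case, fix an ideal $\mathfrak a \subseteq A$ and a finite family $M = \oplus_{i=1}^r A/\mathfrak m_i$ of simple $A$-modules with $\mathfrak m_i \in Z(\mathfrak a)$. The structure map $\eta^A_M \colon A \to \enm_{\mathbb Z}(M)$ vanishes on $\mathfrak a$ and therefore factors as $A \twoheadrightarrow A/\mathfrak a \to \enm_{\mathbb Z}(M)$. The key step is to verify that $A_M$, being generated inside $\enm_{\mathbb Z}(M)$ by $\im \eta^A_M$ together with inverses of elements of $\eta^A_M(D_M \setminus 0)$, depends only on this image, so that $A_M = (A/\mathfrak a)_{\bar M}$ when $\bar M$ denotes the same modules viewed over $A/\mathfrak a$. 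Under the bijection $\aspec(A/\mathfrak a) \leftrightarrow Z(\mathfrak a)$ supplied by the earlier lemma, finite families of simple $A$-modules in $Z(\mathfrak a)$ correspond to finite families of simple $A/\mathfrak a$-modules, the two projective systems become isomorphic, and
\[
\mathcal O_{Z(\mathfrak a)}(Z(\mathfrak a)) \;=\; \varprojlim_{M \subseteq Z(\mathfrak a)} A_M \;=\; \varprojlim_{\bar M \subseteq \aspec(A/\mathfrak a)} (A/\mathfrak a)_{\bar M} \;=\; \mathcal O_{\aspec(A/\mathfrak a)}(\aspec(A/\mathfrak a)),
\]
which by the preceding proposition applied to $A/\mathfrak a$ equals $A/\mathfrak a$. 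The in-particular statement is then the special case $\mathfrak a = (0)$.

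The main obstacle will be the intermediate identification $A_M = (A/\mathfrak a)_{\bar M}$: one must check that both the ambient ring $\enm_{\mathbb Z}(M)$ and the set of elements whose images are to be inverted are insensitive to $\mathfrak a$, and then invoke the universal property of the local function ring recorded after its definition to match the two resulting subrings of $\enm_{\mathbb Z}(M)$. Once this is in hand, the remainder is bookkeeping with projective limits plus a direct appeal to the preceding proposition.
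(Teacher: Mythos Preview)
Your proposal is correct and supplies far more detail than the paper, whose entire proof reads ``This follows directly from the definition.'' Your unpacking of the definitions---identifying $\mathcal O_Y(D(f))$ with $\mathcal O_X(D(f))\simeq A_f$ via the preceding proposition, and for the commutative case showing $A_M=(A/\mathfrak a)_{\bar M}$ so that the projective system over $Z(\mathfrak a)$ matches the one over $\aspec(A/\mathfrak a)$---is exactly the content hiding behind that one-line assertion. One small remark: your derivation of the ``in particular'' statement as the case $\mathfrak a=(0)$ only covers commutative $A$, but for general associative $A$ it is already the preceding proposition verbatim, or equivalently your first claim with $f=1$.
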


\begin{proof} This follows directly from the definition.
\end{proof}

\section{Algebraic Closures of Associative Schemes}

Let $k$ be a field and consider the polynomial algebra in $n\geq 1$ variables $$A=k[x_1,\dots,x_n]=k[\underline x].$$ An algebraic set is the set of zeroes $V=Z(\mathfrak a)\subseteq k^n=\mathbb A^n_k$ in an ideal $\mathfrak a\subset A.$ An affine algebraic variety is an irreducible algebraic subset of $\mathbb A^n_k.$ The coordinate ring of $V$ is $A(V)=A/\mathfrak a.$  The maximal ideals in $A$ is known to be in bijective correspondence with $\mathbb A^n_k$ if and only if $k$ is algebraically closed, and so many results from algebraic geometry fails over the real numbers.

Call a point $x$ in an associative scheme $X$ simple if $x$ is a simple $A$-module for some open $x\in U=\spec A\subseteq X.$ Let $X\rightarrow\spec k$ be an associative scheme over a field $k,$ and let $\mathbb X=X\times_k\Bbbk$ where $\Bbbk$ is the algebraic closure of $k.$  Denote the projection to $X$ by $p:\mathbb X\rightarrow X.$

\begin{definition} Define the subset of $k$-points in $\mathbb X$ by $$\tilde{\mathbb X}(k)=\{x\in X\subseteq\mathbb X|x\text{ is simple}\}\subseteq\mathbb X.$$ Then the induced associative subscheme $\mathbb X(k)$ is called the associative subscheme of $k$-points. 
\end{definition} 

Notice in particular that $\mathbb X(k)$ is a scheme over $\Bbbk$ as we have the composition of morphisms $\mathbb X(k)\rightarrow\mathbb X\rightarrow\spec\Bbbk.$ 

\begin{lemma}\label{isolemma} Let $V_i,\ i=1,2,$ be  finite dimensional vector spaces over $k,$ and let $V_{i,\Bbbk}=V_i\otimes_k\Bbbk.$ If $\phi:V_1\rightarrow V_2$ is a $k$-linear homomorphism such that the induced $\Bbbk$-linear homomorphism $\phi\otimes\id$ is an isomorphism, then $\phi$ is already an isomorphism.
\end{lemma}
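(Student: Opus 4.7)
The statement is a standard faithful-flatness/dimension comparison, and the plan is to exploit the fact that $\Bbbk$ is a free $k$-module (hence faithfully flat) together with finite-dimensionality.

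First I would reduce the problem to checking injectivity. Since $V_1$ and $V_2$ are finite-dimensional $k$-vector spaces, one has $\dim_{\Bbbk} V_{i,\Bbbk} = \dim_k V_i$. The hypothesis that $\phi \otimes \id$ is an isomorphism therefore forces $\dim_k V_1 = \dim_k V_2$, and for a $k$-linear map between finite-dimensional spaces of equal dimension, injectivity is equivalent to bijectivity. So it suffices to show $\ker \phi = 0$.

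Next I would establish injectivity by tracking an element through the canonical map $\iota : V_1 \to V_1 \otimes_k \Bbbk$, $v \mapsto v \otimes 1$. Because $\Bbbk$ is a free $k$-module containing $1$ as part of a basis, $\iota$ is injective. Given $v \in \ker \phi$, the image $\iota(v) = v \otimes 1$ lies in $\ker(\phi \otimes \id) = 0$, whence $v = 0$ by injectivity of $\iota$. Combined with the dimension count above, this yields the claim.

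If one prefers an argument avoiding an explicit dimension count, one can instead observe that tensoring the exact sequences
\[
0 \to \ker \phi \to V_1 \xrightarrow{\phi} V_2 \to \coker \phi \to 0
\]
with the flat $k$-algebra $\Bbbk$ gives $\ker(\phi \otimes \id) \cong (\ker \phi) \otimes_k \Bbbk$ and similarly for the cokernel; both vanish by hypothesis, and faithful flatness (equivalently, the freeness of $\Bbbk$ over $k$) then forces $\ker \phi = 0$ and $\coker \phi = 0$. There is no real obstacle here — the only subtlety worth stating explicitly is why $v \otimes 1 = 0$ in $V_1 \otimes_k \Bbbk$ implies $v = 0$, which is where the field (or at least free-module) hypothesis on $\Bbbk/k$ is actually used.
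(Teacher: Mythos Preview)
Your argument is correct. Both you and the paper begin with the same dimension count $\dim_k V_1=\dim_k V_2$, but the second step differs: the paper chooses corresponding bases so that $\phi$ and $\phi\otimes\id$ are represented by the same matrix and then observes $0\neq\det(\phi\otimes\id)=\det\phi$, whereas you establish injectivity via the canonical embedding $V_1\hookrightarrow V_1\otimes_k\Bbbk$ (or, in your alternative, via faithful flatness of $\Bbbk$ over $k$). The determinant route is shorter and uses only that $k\subseteq\Bbbk$; your approach is basis-free, makes explicit where the field hypothesis enters, and the flatness variant works verbatim without the finite-dimensionality assumption.
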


\begin{proof} Because $k\subseteq\Bbbk$ is a sub-algebra, it follows that if $\phi\otimes\id$ is an isomorphism, then $\dim_k V_1=\dim_k V_2,$ and that choosing corresponding bases, $$0\neq\det(\phi\otimes\id)=\det\phi.$$  
\end{proof}

\begin{proposition} Let $A$ be a $k$ algebra, let $\mathbb A=A\otimes_k\Bbbk.$ If $M$ is an $A$-module of finite $k$-dimension such that $M\otimes_k\Bbbk$ is a simple $\mathbb A$-module, then $M$ is simple as $A$-module.
\end{proposition}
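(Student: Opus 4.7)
The plan is to argue by contrapositive. Suppose $M$ is not simple as an $A$-module; then there exists a proper nonzero $A$-submodule $N \subsetneq M$. I will then show that $N\otimes_k\Bbbk$ is a proper nonzero $\mathbb A$-submodule of $M\otimes_k\Bbbk$, contradicting the simplicity hypothesis.

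The first step is to identify $N\otimes_k\Bbbk$ with a subspace of $M\otimes_k\Bbbk$. Since $\Bbbk$ is a field extension of $k$, it is flat (in fact faithfully flat) over $k$, so tensoring the exact sequence $0\to N\to M\to M/N\to 0$ with $\Bbbk$ preserves injectivity, giving a short exact sequence of $\mathbb A$-modules
\begin{equation*}
0\to N\otimes_k\Bbbk \to M\otimes_k\Bbbk \to (M/N)\otimes_k\Bbbk \to 0.
\end{equation*}
In particular $N\otimes_k\Bbbk$ is an $\mathbb A$-submodule of $M\otimes_k\Bbbk$, and it is nonzero because $N\neq 0$ and $\Bbbk$ is faithfully flat over $k$ (equivalently, pick a $k$-basis of $N$ and observe that the corresponding pure tensors remain linearly independent in $N\otimes_k\Bbbk$).

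The second step is to show that the inclusion $N\otimes_k\Bbbk \hookrightarrow M\otimes_k\Bbbk$ is strict. This is exactly where Lemma \ref{isolemma} is applied: both $N$ and $M$ are finite-dimensional over $k$ (the former because it is a $k$-subspace of the finite-dimensional $M$), so the inclusion $\phi:N\hookrightarrow M$ is a $k$-linear map between finite-dimensional spaces. If $\phi\otimes\id$ were an isomorphism, then by the lemma $\phi$ itself would be an isomorphism, forcing $N=M$, contrary to assumption. Hence $N\otimes_k\Bbbk \subsetneq M\otimes_k\Bbbk$ is a proper $\mathbb A$-submodule, which contradicts the simplicity of $M\otimes_k\Bbbk$.

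The argument is essentially mechanical once one recognizes that Lemma \ref{isolemma} is precisely the tool needed to rule out the degenerate case $N\otimes_k\Bbbk=M\otimes_k\Bbbk$; the only subtlety is the invocation of flatness and of finite-dimensionality of $N$ (which justifies applying the lemma to the inclusion). No further computation is required.
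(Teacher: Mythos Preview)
Your argument is correct, but it is not the route taken in the paper. The paper's proof invokes Wedderburn's theorem: over the algebraically closed field $\Bbbk$, a finite-dimensional module $\mathbb M=M\otimes_k\Bbbk$ is simple if and only if the structure map $\mathbb A\to\enm_\Bbbk(\mathbb M)$ is surjective; since $\enm_\Bbbk(\mathbb M)\cong\enm_k(M)\otimes_k\Bbbk$, Lemma~\ref{isolemma} (applied to the image of $A\to\enm_k(M)$ inside $\enm_k(M)$) forces the structure map $A\to\enm_k(M)$ itself to be surjective, whence $M$ is simple. Your contrapositive argument bypasses Wedderburn entirely, applying Lemma~\ref{isolemma} instead to the inclusion $N\hookrightarrow M$ of a putative proper submodule. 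This is more elementary and in fact more general: it uses nothing about $\Bbbk$ beyond flatness over $k$, so it works for an arbitrary field extension, whereas the paper's proof genuinely needs $\Bbbk$ to be algebraically closed for the Wedderburn characterization to hold. (Indeed, your invocation of Lemma~\ref{isolemma} is itself slightly heavier than necessary---a bare dimension count $\dim_\Bbbk(N\otimes_k\Bbbk)=\dim_k N<\dim_k M=\dim_\Bbbk(M\otimes_k\Bbbk)$ already shows properness---but the argument is sound as written.)
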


\begin{proof} By Wedderburn's Theorem, \cite{S244}, when $\Bbbk$ is algebraically closed and $\mathbb M\otimes_k\Bbbk$ is finite dimensional, the structure homomorphism $\eta^A_{\mathbb M}\rightarrow\enm_\Bbbk(\mathbb M)$ is surjective if and only if $\mathbb M$ is simple. By Lemma \ref{isolemma} above, it follows that the structure morphism $A\rightarrow\enm_k(M)$ is surjective. This implies that $M$ is $A$-simple.
\end{proof}

In general, for an associative scheme $X$ over a field $k,$ we consider the scheme of $k$-points in its algebraic extension $\mathbb X.$ Restring to the final dimensional simple points in $\mathbb X(k),$ we get induced the associative scheme $X^{<\infty}$ parametrizing a particular class of finite dimensional points in $X$ which we can call root-points in lack of a better name.

\begin{example} The points in $\mathbb R^n$ is not in one-to-one correspondence with the maximal ideals in $\mathbb R[x_1,\dots,x_n].$ The easiest example is $(x^2+1)\subset\mathbb R[x]$ which is a maximal ideal without roots in $\mathbb R.$ However, embedding $\mathbb R^n\subset\mathbb C^n,$ the points in $\mathbb R^n$ is in bijective correspondence with the maximal ideals in $\mathbb C[x_1,\dots,x_n]$ on the form $(x_1-\alpha_1,\dots,x_n-\alpha_n)$ with $\alpha_i\in\mathbb R, 1\leq i\leq n.$ Thus $\mathbb R^n$ is in bijective correspondence with the closed points in the (commutative) scheme $\mathbb A^n_{\mathbb C}(\mathbb R)/\mathbb C.$
\end{example}

\end{document}